\newtheorem{theorem}{Theorem}[section]
\newtheorem{corollary}[theorem]{Corollary}
\newtheorem{proposition}[theorem]{Proposition}
\theoremstyle{definition}
\newtheorem{definition}[theorem]{Definition}
\newtheorem{example}[theorem]{Example}
\numberwithin{equation}{section}
\begin{document}


\baselineskip=17pt


\title{Cantorvals as sets of subsums for a series connected with trigonometric functions}

\author{Mykola Pratsiovytyi\\
Institute of Mathematics\\
National Academy of Sciences of Ukraine\\
Tereschenkivska 3\\
01024 Kyiv, Ukraine\\
E-mail: pratsiovytyi@imath.kiev.ua
\and
Dmytro Karvatskyi\\
Institute of Mathematics\\
National Academy of Sciences of Ukraine\\
Tereschenkivska 3\\
01024 Kyiv, Ukraine\\
E-mail: karvatsky@imath.kiev.ua}

\date{}

\maketitle


\renewcommand{\thefootnote}{}

\footnote{2020 \emph{Mathematics Subject Classification}: 40A05, 11B05, 28A75.}

\footnote{\emph{Key words and phrases}: achievement set of sequence, multigeometric series, the set of subsums, Cantorval.}

\renewcommand{\thefootnote}{\arabic{footnote}}
\setcounter{footnote}{0}


\begin{abstract}
We study properties of the set of subsums for convergent series $~~~~~ k_1 \sin x + \dots + k_m \sin x +\dots + k_1\sin x^n +\dots + k_m \sin x^n + \dots $, \newline
where $k_1, k_2, k_3,\dots,k_m$ are fixed positive integers and $0<x<1$. ~ ~ ~ ~ Depends on parameter $x$ this set can be a finite union of closed intervals or Cantor-type set or even Cantorval.
\end{abstract}

\section{Introduction}

\begin{definition}
Let $M\subseteq N$. Then number
\begin{equation}
 \label{incomplit sum}
x=x\left(M\right)=\sum_{n\in M\subseteq N}{u_n}=\sum^{\infty }_{n=1}{{\varepsilon }_nu_n}, ~ ~ ~ {\varepsilon }_n=\left\{ \begin{array}{c}
1 \ \text{ for }  \ n\in M,\  \\
0 \ \text{ for }  \ n\notin M,\  \end{array}
\right.
\end{equation}
is called the subsum of the series $\displaystyle \sum{u_n}$. By $E(u_n)$ we denote the set of all subsums \eqref{incomplit sum} for this series. This mathematical object is also defined by some scientists as achievement set of the sequence $(u_n)$.
\end{definition}
Investigation of subsums for a convergent positive, absolutely convergent, divergent, complex series has a very deep history. In the paper \cite{Kakeya} Japanese mathematician Soichi Kakeya proved that the set $E(u_n)$ for a positive convergent series is:

\begin{itemize}
\item a perfect set;
\item a finite union of closed intervals if relation $\displaystyle u_{n} \leq r_{n}=\sum_{i=n+1}^{\infty}u_i$
holds for sufficiently large $n$;
\item homeomorphic to the classic Cantor set (Cantor-type set) if relation \\ $\displaystyle u_{n} > r_{n}=\sum_{i=n+1}^{\infty}u_i$
holds for sufficiently large $n$.
\end{itemize}

In the same paper Kakeya formulated a hypothesis that for any positive convergent series $E(u_n)$ is rather a Cantor-type set or a finite union of closed intervals. However, this conjecture was false. The simplest example was presented by American mathematicians J. Guthrie and J. Nymann \cite{Guthrie 1988}:
The set $T$ of subsums for Guthrie-Nymann's series
\begin{equation}
\label{GNS}
\frac{3}{4}+\frac{2}{4}+\frac{3}{4^2}+\frac{2}{4^2}+\frac{3}{4^3}+\frac{2}{4^3}+\dots+\frac{3}{4^n}+\frac{2}{4^n}+\dots
\end{equation}
contains the interval $\left[ 3/4, 1 \right]$, but it is not a finite union of closed intervals.
This set $T$ has properties of nowhere dense set and infinite union of closed intervals and it is called Cantorval.
Every Cantorval is homeomorphic to the set $T^*$ which is defined as follows

$$T^* \equiv C \cup \bigcup_{n=1}^{\infty} G_{2n-1}=[0, 1] \setminus \bigcup_{n=1}^{\infty} G_{2n},$$
where $C$ is the Cantor ternary set, $G_k$ is the union of the $2^{n-1}$ open middle thirds which are removed from $[0, 1]$ at the $n$-th step in the construction of $C$.
As a result of articles \cite{Guthrie 1988} and \cite{Nymann}, we have the following theorem.

\begin{theorem}
\label{GN-Theorem}
The set of subsums for a convergent positive series is one of the following three types: 
\begin{enumerate}
\item a finite union of closed intervals;
\item homeomorphic to the Cantor set;
\item M-Cantorval (homeomorphic to the set $T$).
\end{enumerate}
\end{theorem}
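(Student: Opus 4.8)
The plan is to reduce Theorem~\ref{GN-Theorem} to a topological trichotomy and then feed it into standard characterizations of the Cantor set and of the Cantorval $T$. Since reordering a positive convergent series does not change its set of subsums, I would first assume $u_1 \ge u_2 \ge \cdots$; put $\sigma = \sum_n u_n$, so that $E(u_n) \subseteq [0,\sigma]$, and recall $r_n = \sum_{i>n} u_i$. Two elementary observations come next: (a) $E(u_n)$ is compact, being the image of the Cantor cube $\{0,1\}^{\mathbb N}$ under the continuous map $(\varepsilon_n) \mapsto \sum_n \varepsilon_n u_n$; and (b) $E(u_n)$ is perfect, for if $x = \sum_n \varepsilon_n u_n$ then either infinitely many $\varepsilon_n$ vanish, and flipping one far-out $\varepsilon_n$ from $0$ to $1$ produces a point of $E(u_n)$ at distance $u_n$ from $x$ with $u_n\to 0$, or $\varepsilon_n = 1$ for all large $n$ and the symmetric perturbation works. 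I would also record the self-referential decomposition $E(u_n) = F_N + E^{(N)}$, where $F_N$ is the finite set of subsums of $u_1,\dots,u_N$ and $E^{(N)} \subseteq [0,r_N]$ is the subsum set of the tail $(u_n)_{n>N}$; this is the bridge between the comparison of $u_n$ with $r_n$ and the shape of $E(u_n)$.

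The argument then splits on whether $\operatorname{int} E(u_n) = \emptyset$. If the interior is empty: a connected subset of $\mathbb R$ with more than one point contains an interval, so a subset of $\mathbb R$ with empty interior is totally disconnected; hence $E(u_n)$ is a compact, perfect, totally disconnected metric space, which by Brouwer's characterization is homeomorphic to the Cantor set. (In particular this recovers the sufficiency of Kakeya's condition $u_n > r_n$ for large $n$, which via the decomposition above forces $E^{(N)}$, and therefore $E(u_n)$, to be nowhere dense.) This is alternative~(2).

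Suppose now $\operatorname{int} E(u_n) \ne \emptyset$. If $E(u_n)$ has only finitely many connected components, each of them is a compact connected subset of $\mathbb R$, hence a point or a closed interval; a singleton component would lie at positive distance from the remaining finitely many components and so be isolated, contradicting perfectness, so every component is a nondegenerate closed interval and $E(u_n)$ is a finite union of closed intervals --- alternative~(1). (Using $u_n\downarrow$ one checks this occurs precisely when $u_n \le r_n$ for all large $n$, i.e.\ when $E^{(N)} = [0,r_N]$ for some $N$.) There remains the genuinely difficult case: $\operatorname{int} E(u_n) \ne \emptyset$ together with infinitely many components. Here the goal is $E(u_n)\cong T$, which I would reach in three steps: (i) prove $E(u_n)$ is regular closed, $\overline{\operatorname{int} E(u_n)} = E(u_n)$, so that its nondegenerate components are dense; (ii) prove the singleton components of $E(u_n)$ are also dense in it; (iii) invoke the topological characterization of the Guthrie--Nymann Cantorval: a compact, regular-closed subset of $\mathbb R$ with infinitely many connected components whose singleton components are dense is homeomorphic to $T$~\cite{Nymann}. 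This gives alternative~(3) and, together with the two earlier branches, exhausts all possibilities.

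The main obstacle is steps (i)--(ii) of the last branch: excluding a ``fourth type.'' Compactness and perfectness are immediate, and the Cantor-set and Cantorval characterizations are off the shelf; the substance is the combinatorial bookkeeping of the finite partial-subsum sets $F_N$ against the tail sets $E^{(N)}\subseteq[0,r_N]$. One must show that once $E(u_n)$ has nonempty interior but is not a finite union of intervals, the overlap pattern of the translates $f + E^{(N)}$, $f\in F_N$, generates at arbitrarily small scales both a dense family of nondegenerate components and a dense family of singleton components, with the quasi-self-similar structure preventing any component from being isolated --- which is exactly what rules out, e.g., a set such as $\{0\}\cup\bigcup_{n\ge1}[\tfrac1{2n+1},\tfrac1{2n}]$ (compact, regular closed, infinitely many components, yet not a Cantorval) as a possible value of $E(u_n)$. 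This is the heart of the analysis completed in \cite{Nymann}, and where I would expect the bulk of the work to lie.
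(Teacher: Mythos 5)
First, a point of reference: the paper does not prove Theorem~\ref{GN-Theorem} at all --- it is quoted as a known result of \cite{Guthrie 1988} and \cite{Nymann} --- so there is no internal proof to compare yours against. Judged on its own, your scaffolding is correct and standard: compactness and perfectness of $E(u_n)$, the Brouwer characterization in the empty-interior case, and the argument that nonempty interior plus finitely many components forces a finite union of nondegenerate closed intervals are all fine. The problems are concentrated in the third branch, which is where the entire content of the theorem lives.

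There are two genuine gaps. (1) The characterization of $T$ you invoke in steps (ii)--(iii) is misstated, and as stated is vacuous: a nonempty regular-closed subset $S$ of $\mathbb{R}$ has nonempty interior, and an interior point has a whole neighborhood contained in its own nondegenerate component, so no singleton component of $S$ can approach it. Hence the singleton components are \emph{never} dense in a nonempty regular-closed set, step (ii) cannot be proved, and no set satisfies the hypotheses of step (iii). The correct Mendes--Oliveira-type characterization \cite{Mendes} requires instead that both endpoints of every nondegenerate component (equivalently, of every gap) be accumulation points of singleton components --- density of the singletons in the boundary of $S$, not in $S$ itself. (2) Even after that repair, the substance of the theorem is precisely the claim that a subsum set with nonempty interior and infinitely many components must be regular closed and must have the required accumulation of singleton components --- i.e., that a ``fourth type'' such as $\{0\}\cup\bigcup_{n\geq 1}\left[1/(2n+1),\,1/(2n)\right]$ cannot arise as $E(u_n)$. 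You explicitly defer this to \cite{Nymann}; the promised bookkeeping of the finite subsum sets $F_N$ against the tail sets $E^{(N)}$ is never carried out. That is an honest map of where the difficulty lies, but it makes the proposal a reduction of the theorem to its only hard lemma rather than a proof of it.
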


From the other hand, Cantorvals naturally can be defined as the product
$$C_1 + C_2=\{ x_1 + x_2; x_1 \in C_1, x_2 \in C_2\}$$
of the arithmetic sum of two Cantor sets \cite{Mendes}.

\section{Multigeometric sequences and related series}

The necessary and sufficient condition that the set of subsums is a Cantorval or homeomorphic to the Cantor set is still unknown. Despite essential progress for some series, the problem is quite difficult in general case. In this context scientists focus on series such that their terms are elements of some sequence \cite{PKA} with some condition of homogeneity (depend on finite numbers of parameters and defined by a formula for general term or  recurrence relation).

A large number of articles are devoted to the study of topological and metric properties of the set of subsums for the series
\begin{equation}
\label{MGS}
\sum_{n=1}^{\infty} v_n = k_1+ \dots +k_m+k_1q + \dots +k_mq+\dots +k_1q^{n-1} +\dots +k_mq^{n-1} + \dots,
\end{equation}
where $k_1, k_2, \dots k_m$ are fixed positive integers, $q \in \left(0, 1\right)$. Terms of this series are an elements of multigeometric sequence.
In the paper \cite{Bartoszewicz} some conditions for $E(v_n)$ to be a Cantorval or a Cantor-type set were found.

\begin{theorem}{(\cite{Bartoszewicz})}
Let $k_1 \geq k_2 \dots \geq k_m$ and $K=\sum_{i=1}^{m}k_i$. Assume that there exist positive integers $n_0$ and $n^*$ such that each of the numbers $n_0, n_0+1,$ $n_0+2, \dots, n_0+n^*$ can be obtained by summing up the numbers $k_1, k_2, \dots, k_m$. Then the set $E(v_n)$ satisfies following:
\begin{itemize}
\item if $q \geq1/(n^*+1)$ then $E(v_n)$ contains an interval;
\item if $q <k_m/(K+k_m)$ then $E(v_n)$ is not a finite union of closed interval;
\item if $1/(n^*+1) \leq q <k_m/(K+k_m)$ then $E(v_n)$ is a Cantorval.

\end{itemize}

\end{theorem}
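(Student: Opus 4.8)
My plan is to recast the achievement set arithmetically and then dispatch the three bullets in order, the last being purely formal. Write $A:=\{\sum_{i\in I}k_i:\ I\subseteq\{1,\dots,m\}\}$ for the set of all subset sums of $k_1,\dots,k_m$; then $E(v_n)=\{\sum_{j=0}^{\infty}a_jq^j:\ a_j\in A\}$, so $E(v_n)=\bigcup_{a\in A}(a+qE(v_n))$ with $\min E(v_n)=0$ and $\max E(v_n)=K/(1-q)$. Two elementary facts about $A$ do the work, both flowing from $k_m=\min_i k_i$: the smallest positive element of $A$ is $k_m$ (a nonempty subset sum is at least $k_m$, and the singleton $\{k_m\}$ attains it), and the largest element of $A$ below $K$ is $K-k_m$ (omitting any summand costs at least $k_m$).

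For the first bullet I would keep only the guaranteed block of consecutive achievable values. Since $\{n_0,n_0+1,\dots,n_0+n^*\}\subseteq A$, we have $E(v_n)\supseteq\{\sum_{j\ge0}a_jq^j:\ a_j\in\{n_0,\dots,n_0+n^*\}\}=\frac{n_0}{1-q}+S$, where $S:=\{\sum_{j\ge0}c_jq^j:\ c_j\in\{0,1,\dots,n^*\}\}$. Now $S=\bigcup_{c=0}^{n^*}(c+qS)$, and substituting the interval $[0,\,n^*/(1-q)]$ for $S$ shows this fixed-point relation is satisfied precisely when consecutive sub-blocks $[c,\,c+\tfrac{n^*q}{1-q}]$ and $[c+1,\,c+1+\tfrac{n^*q}{1-q}]$ abut, i.e. when $\tfrac{n^*q}{1-q}\ge1$, i.e. when $q\ge\tfrac1{n^*+1}$; hence $S=[0,\,n^*/(1-q)]$ and $E(v_n)$ contains the interval $[\tfrac{n_0}{1-q},\tfrac{n_0+n^*}{1-q}]$. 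This is the classical fact that a complete consecutive digit alphabet $\{0,\dots,D\}$ in base $1/q$ fills a whole interval exactly when $1/q\le D+1$.

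For the second bullet the engine is a self-replicating gap at the bottom of $E(v_n)$. If a subsum has $a_0\ne0$ then $a_0\in A$ forces $a_0\ge k_m$, so that subsum is $\ge k_m$; hence every element of $E(v_n)$ below $k_m$ equals $q$ times an element of $E(v_n)$, and conversely $qE(v_n)\subseteq E(v_n)$ and $qE(v_n)\subseteq[0,\,qK/(1-q)]$. Since $q<k_m/(K+k_m)$ is exactly $qK/(1-q)<k_m$, we get $E(v_n)\cap[0,k_m)=qE(v_n)$, so in particular $(\tfrac{qK}{1-q},\,k_m)$ is a gap of $E(v_n)$. Suppose, for contradiction, that $E(v_n)$ is a finite union of closed intervals. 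Since $0=\min E(v_n)$, its leftmost interval is $[0,d_1]$ with $d_1\ge0$, and $d_1<k_m$ (else $[0,d_1]$ would cover the gap just found). But then $[0,d_1]$ is also the leftmost interval of $E(v_n)\cap[0,k_m)=qE(v_n)$, whose leftmost interval is $[0,qd_1]$; hence $d_1=qd_1$, i.e. $d_1=0$, making $0$ an isolated point of $E(v_n)$. This contradicts Kakeya's theorem that the achievement set of a positive convergent series is perfect; alternatively, $k_mq^N\in E(v_n)$ and $k_mq^N\to0$, so $0$ is not isolated. Therefore $E(v_n)$ is not a finite union of closed intervals.

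For the third bullet, when $\tfrac1{n^*+1}\le q<\tfrac{k_m}{K+k_m}$ the first two bullets both apply, so $E(v_n)$ contains an interval and yet is not a finite union of closed intervals. By the Guthrie--Nymann trichotomy (Theorem~\ref{GN-Theorem}) the only remaining alternative is that $E(v_n)$ is an $M$-Cantorval, since a set homeomorphic to the Cantor set contains no interval. I expect the genuine difficulty to sit in the second bullet: turning the one visible gap into a proof of non-finiteness requires exploiting the exact scaling identity $E(v_n)\cap[0,k_m)=qE(v_n)$ and being careful both about degenerate intervals and about the appeal to perfectness. The first bullet is the standard interval-filling lemma for $\beta$-expansions, and the third is a formality once the trichotomy is in hand.
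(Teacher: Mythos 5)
Your argument is correct, but note that the paper itself gives no proof of this statement: it is quoted verbatim from Bartoszewicz--Filipczak--Szymonik \cite{Bartoszewicz}, so there is nothing internal to compare against line by line. The closest comparison is with the paper's own Theorems 3.4--3.6 and Corollary 3.7, which establish the sine analogues of exactly these three bullets, and there the authors' route differs from yours only in the second bullet. For the first bullet you both do the same thing: restrict the digits to the consecutive block $\{n_0,\dots,n_0+n^*\}$, reduce to the series whose terms are $q^j$ repeated $n^*$ times, and invoke the interval-filling criterion $q\ge 1/(n^*+1)$ (the paper phrases this as Kakeya's condition $b_n\le r_n^*$ rather than as uniqueness of the attractor of $X=\bigcup_c(c+qX)$, but these are the same computation). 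For the second bullet the paper's method is to check directly that the term-versus-tail inequality $v_{mi}>r_{mi}$, i.e.\ $k_mq^{i-1}>Kq^{i}/(1-q)$, holds for every $i$ precisely when $q<k_m/(K+k_m)$, which produces a gap at the end of every block and hence infinitely many gaps; you instead derive the exact scaling identity $E(v_n)\cap[0,k_m)=qE(v_n)$ and run a component-counting contradiction ($d_1=qd_1$ plus perfectness). Your identity is strictly stronger information than one gap per block and iterates to give the gaps $\left(q^{n+1}K/(1-q),\,q^nk_m\right)$ accumulating at $0$, which is a slightly more structural (and arguably cleaner) way to rule out a finite union of intervals; the Kakeya-style check is shorter and is what both \cite{Bartoszewicz} and this paper actually use. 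The third bullet is, as you say, immediate from the Guthrie--Nymann--Saenz trichotomy in both treatments. One cosmetic remark: the fact you record about the largest element of $A$ below $K$ being $K-k_m$ is never used, and in the first bullet the phrase ``precisely when'' should really be ``whenever,'' since you only need sufficiency of $q\ge 1/(n^*+1)$ for $S$ to be an interval.
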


\begin{theorem}{(\cite{Bartoszewicz})}
Let the set $\Sigma$ is defined as
$$\Sigma=\left\{ \sum_{i=1}^{m} \varepsilon_i k_i : (\varepsilon_i)_{i=1}^{m} \in \{ 0, 1 \}^{m} \right\}.$$
If $q<1/card \Sigma$ then $E(v_n)$ is a Cantor-type set.
\end{theorem}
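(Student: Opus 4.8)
The plan is to show that $E(v_n)$ is a perfect, compact subset of $\mathbb{R}$ with empty interior; combined with Theorem~\ref{GN-Theorem} this forces $E(v_n)$ to be homeomorphic to the Cantor set, since the other two members of the trichotomy — a finite union of closed intervals, and an $M$-Cantorval — both have nonempty interior. So the whole task reduces to producing an interval the set cannot contain, i.e.\ to showing $E(v_n)$ is Lebesgue-null.

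First I would record the self-similar structure of the achievement set. Write $K=k_1+\dots+k_m$ and group the terms of \eqref{MGS} into the blocks $k_1q^{n-1},\dots,k_mq^{n-1}$. Every subsum of \eqref{MGS} is obtained by choosing, independently for each block index $j$, a number $\sigma_j\in\Sigma$ and adding $q^{j-1}\sigma_j$; hence
$$E(v_n)=\Big\{\sum_{j=1}^{\infty}q^{j-1}\sigma_j:\ \sigma_j\in\Sigma \text{ for all } j\Big\}=\bigcup_{s\in\Sigma}\bigl(s+q\,E(v_n)\bigr),$$
and since $\max\Sigma=K$ we get $E(v_n)\subseteq[0,\,K/(1-q)]$.

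The core of the argument is a covering estimate. Fix a level $k$ and a string $(\sigma_1,\dots,\sigma_k)\in\Sigma^{k}$. The set of those $x\in E(v_n)$ whose first $k$ block-digits are $\sigma_1,\dots,\sigma_k$ equals $\sum_{j=1}^{k}q^{j-1}\sigma_j+q^{k}E(v_n)$, which is contained in an interval of length $q^{k}K/(1-q)$. There are $(\operatorname{card}\Sigma)^{k}$ such strings, so $E(v_n)$ is covered by $(\operatorname{card}\Sigma)^{k}$ intervals of total length $(\operatorname{card}\Sigma\cdot q)^{k}\cdot K/(1-q)$. Because $q<1/\operatorname{card}\Sigma$, this quantity tends to $0$ as $k\to\infty$, so $\lambda\bigl(E(v_n)\bigr)=0$; in particular $E(v_n)$ contains no interval and has empty interior. (Equivalently one may note that $E(v_n)$ is the attractor of the iterated function system $\{x\mapsto s+qx:\ s\in\Sigma\}$ of $\operatorname{card}\Sigma$ similarities of ratio $q$, whose Hausdorff dimension is at most $\log(\operatorname{card}\Sigma)/\log(1/q)<1$.)

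It remains to assemble the pieces. By Kakeya's theorem quoted in the Introduction, $E(v_n)$ is perfect; it is also compact, being the continuous image of the Cantor space $\{0,1\}^{\mathbb{N}}$ under $(\varepsilon_n)\mapsto\sum\varepsilon_n v_n$. A subset of $\mathbb{R}$ with empty interior is totally disconnected, since each connected component is an interval and a nondegenerate interval would violate empty interior. Hence $E(v_n)$ is a nonempty, perfect, compact, totally disconnected metrizable space, therefore homeomorphic to the Cantor set. I expect the only point needing a little care to be the bookkeeping in the covering step — checking that the $k$-th level pieces genuinely lie in intervals of length $q^{k}K/(1-q)$ and that there are at most $(\operatorname{card}\Sigma)^{k}$ of them — but this is routine, and overlaps among the pieces only help.
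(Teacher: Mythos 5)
Your argument is correct: the identification $E(v_n)=\{\sum_j q^{j-1}\sigma_j:\sigma_j\in\Sigma\}$, the covering by $(\operatorname{card}\Sigma)^k$ intervals of length $q^kK/(1-q)$ giving Lebesgue measure zero, and the conclusion via perfectness, compactness and total disconnectedness (or via Theorem~\ref{GN-Theorem}) are all sound. Note that the paper itself states this theorem only as a citation of \cite{Bartoszewicz} and gives no proof; your argument is essentially the standard one from that source, which views $E(v_n)$ as the attractor of the iterated function system $\{x\mapsto s+qx: s\in\Sigma\}$ and uses $q\cdot\operatorname{card}\Sigma<1$ in exactly this way.
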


This result was generalized for case $k_1=a, k_2=a+d, k_3=a+2d, \dots,$ $k_m=a+cd$ for some integers parameters $a, b$ and $c$ in \cite{Ferdinands}.
In the same paper authors described another interesting family of Cantorvals generated by multigeometric sequence.

\begin{theorem}{(\cite{Ferdinands})}
Let we have $k_1=a+2nd, k_2=a+(2n-2)d, \dots,$ $k_{m-2}=a+2d, k_{m-1}=a, k_m=d$ with $2nd < a < (2n+2)d$ and $n \geq 4$.
If
$$\frac{1}{2n+2} \leq q < \min\left\{\frac{d}{a}, \frac{a-d}{(n+2)a+(n^2+n)d} \right\}$$
then $E(v_n)$ is a Cantorval.
\end{theorem}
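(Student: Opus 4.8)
The plan is to invoke the Guthrie--Nymann trichotomy (Theorem~\ref{GN-Theorem}): since $E(v_n)$ is the set of subsums of a convergent positive series, it is either a finite union of closed intervals, or homeomorphic to the Cantor set, or an M-Cantorval. Hence it suffices to establish the two negative facts
\begin{enumerate}
\item[(A)] $E(v_n)$ contains a nondegenerate interval, so it is not homeomorphic to the Cantor set;
\item[(B)] $E(v_n)$ is not a finite union of closed intervals.
\end{enumerate}
Throughout I would use the self-similarity $E(v_n)=\Sigma+qE(v_n)$, where $\Sigma=\{\sum_{i=1}^m\varepsilon_i k_i:\varepsilon_i\in\{0,1\}\}$ is the set of subset sums of one block; here $m=n+2$, $\min E(v_n)=0$, $\max E(v_n)=K/(1-q)$ with $K=\sum_{i=1}^m k_i=(n+1)a+(n^2+n+1)d$, and $q^{\ell}E(v_n)\subseteq E(v_n)$ for every $\ell\ge 0$.

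For (A) I would first observe that $\Sigma$ contains the arithmetic progression $P=\{a,a+d,a+2d,\dots,a+(2n+1)d\}$: using exactly one of the ``large'' summands $a+2jd$ ($0\le j\le n$) together with, optionally, the summand $d$, one realizes every element of $P$. Feeding $P$ into the self-similar relation gives
\[
E(v_n)\supseteq \{\textstyle\sum_{j\ge 0}q^j p_j:p_j\in P\}=\frac{a}{1-q}+d\cdot\bigl\{\textstyle\sum_{j\ge 0}q^j\ell_j:\ell_j\in\{0,1,\dots,2n+1\}\bigr\},
\]
and the set in braces is the achievement set of the series consisting of $2n+1$ copies of each $q^j$; by Kakeya's first criterion (each term is at most the remaining tail precisely when $q\ge 1/(2n+2)$) it equals $[0,(2n+1)/(1-q)]$. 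Therefore $E(v_n)$ contains the interval $[\,a/(1-q),\ (a+(2n+1)d)/(1-q)\,]$, proving (A); when $d=1$ this is just the first Bartoszewicz criterion with $n^*=2n+1$.

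For (B) I would exploit the long gap of $\Sigma$ between its two smallest positive elements. Sorting $\Sigma$ one has $0<d<a<\dots$ (using $a>2nd>d$), so $(d,a)$ is a gap of $\Sigma$. Every copy $\sigma+qE(v_n)$ with $\sigma\in\{0,d\}$ lies in $[0,d+q\operatorname{diam}E(v_n)]$, while every copy with $\sigma\ge a$ lies in $[a,\infty)$; hence the interval $G_0=(d+q\operatorname{diam}E(v_n),\,a)$ is disjoint from $E(v_n)$ as soon as $q\operatorname{diam}E(v_n)=qK/(1-q)<a-d$, which a short computation shows is equivalent to $q<(a-d)/((n+2)a+(n^2+n)d)$ — the second upper bound on $q$. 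Both endpoints of $G_0$ lie in $E(v_n)$, so $G_0$ is a genuine complementary interval. I would then push $G_0$ down by the contractions $x\mapsto q^{\ell}x$: since $q<d/a$ forces $q^{\ell}a<d$ for $\ell\ge 1$, the scaled copy $q^{\ell}G_0=(q^{\ell}(d+q\operatorname{diam}E(v_n)),\,q^{\ell}a)$ lies inside $(0,d)$, so the only copy $\sigma+qE(v_n)$ that could meet it is $\sigma=0$, i.e.\ $qE(v_n)$; but inside $qE(v_n)$ the set $q^{\ell}G_0$ is the image of the gap $q^{\ell-1}G_0$ under $x\mapsto qx$, hence empty by induction. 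Thus $\{q^{\ell}G_0:\ell\ge 1\}$ is an infinite family of pairwise disjoint complementary intervals of $E(v_n)$ accumulating at $0$, with the points $q^{\ell}a$ and $q^{\ell}(d+q\operatorname{diam}E(v_n))$ of $E(v_n)$ separating consecutive ones; so $E(v_n)$ has infinitely many connected components and (B) holds. Combining (A) and (B) with Theorem~\ref{GN-Theorem} shows $E(v_n)$ is an M-Cantorval.

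The step I expect to be the main obstacle is the verification in (B) that the gaps $q^{\ell}G_0$ really survive at every scale — one must check that no translated copy of $E(v_n)$ refills them — and it is here that the precise shapes of \emph{both} upper bounds on $q$ are used; the resulting compatibility with the lower bound $1/(2n+2)$ (together with $2nd<a<(2n+2)d$) is exactly what forces $n\ge 4$, which I would record at the end by checking that the admissible interval for $q$ is then nonempty. By contrast, part (A) is a routine bootstrapping argument and the identification of $\Sigma$ and of the coarse gap $G_0$ is elementary once the block structure of $\{k_1,\dots,k_m\}$ is written out.
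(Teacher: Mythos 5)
This theorem is quoted in the paper from \cite{Ferdinands} without proof, so there is no in-paper argument to compare against; judged on its own, your proposal is correct and follows exactly the strategy that the stated bounds are built around (and that the cited source uses): the lower bound $q\ge 1/(2n+2)$ is the Kakeya condition making the progression $\{a,a+d,\dots,a+(2n+1)d\}\subseteq\Sigma$ fill an interval, the bound $q<(a-d)/((n+2)a+(n^2+n)d)$ is precisely $qK/(1-q)<a-d$ creating the gap $G_0$ between the copies at $\sigma\in\{0,d\}$ and those at $\sigma\ge a$, and $q<d/a$ is what lets the scaled gaps $q^{\ell}G_0$ survive inside $(0,d)$. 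All the individual verifications (the value of $K$, the identification of the two smallest positive elements of $\Sigma$, the induction refuting refilling of $q^{\ell}G_0$, and the appeal to Theorem~\ref{GN-Theorem}) check out.
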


Investigation of sets with difficult local structure, such as cantorvals or fractals, is a trend in modern mathematical research.
In particular, for Guthrie-Nymann's Cantorval and its modifications were solved some topological, metric and probabilistic problems \cite{Bielas}, \cite{PKB}. The paper \cite{Banakh} is devoted to study of some self-similar sets having the same complicated properties as $E(v_n)$.

\section{Trigonometric function and connected series}

Let us consider the series
\begin{equation}
\label{SIN}
\sum_{n=1}^{\infty} a_n = k_1 \sin{x} + \dots + k_m \sin{x} + \dots + k_1 \sin{x^n} + \dots + k_m \sin{x^n} + \dots,
\end{equation}
where $k_1 \geq k_2 \geq \dots \geq k_m$ are fixed positive integers and $x \in \left(0, 1 \right)$.

We define $K=k_1+k_2+\dots+k_m$. For $0<x<1$ the series \eqref{SIN} is convergent as from inequality $\sin{x}<x$ we have that
$$K\sum_{n=1}^{\infty} \sin{x^n} < K\sum_{n=1}^{\infty} x^n=\frac{Kx}{1-x}.$$

We study properties of the set $E(a_n)$ for the series \eqref{SIN} depend on initial parameters $k_1, \dots, k_m, x$.

\begin{theorem}
\label{I}
If the series \eqref{SIN} satisfies the condition
$$x \geq \frac{d}{K + d},$$
$$\text{where} ~ d=\max_{1 \leq j \leq m}{d_j}, ~ d_j=\frac{\pi}{2}k_j-k_{j+1}-k_{j+2}-\dots-k_m,$$
then $E(a_n)$ is an interval.
\end{theorem}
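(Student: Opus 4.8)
The plan is to apply Kakeya's criterion (the first bullet in the excerpt): the set of subsums $E(a_n)$ of a positive convergent series is a finite union of closed intervals, hence here an interval, whenever $a_n \le r_n := \sum_{i>n} a_i$ holds for all $n$. Since the terms of \eqref{SIN} are nonincreasing within each block of $m$ and the blocks themselves scale down (because $\sin x^{n+1} < \sin x^n$), the worst case for the inequality $a_n \le r_n$ occurs at the \emph{first} term of each block, i.e. when $a_n = k_1 \sin x^p$ for some $p \ge 1$; if the inequality holds there it holds a fortiori at $a_n = k_j \sin x^p$ for $j \ge 2$. So the whole problem reduces to verifying, for every $p \ge 1$,
\begin{equation}
\label{keyred}
k_1 \sin x^p \;\le\; (k_2 + \dots + k_m)\sin x^p \;+\; K\sum_{n=p+1}^{\infty}\sin x^n .
\end{equation}

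First I would estimate the tail $\sum_{n=p+1}^{\infty}\sin x^n$ from below. The naive bound $\sin t \ge \tfrac{2}{\pi} t$ on $[0,\pi/2]$ gives $\sum_{n>p}\sin x^n \ge \tfrac{2}{\pi}\sum_{n>p} x^n = \tfrac{2}{\pi}\cdot \tfrac{x^{p+1}}{1-x}$, while for the left side I would use $\sin x^p \le x^p$. Plugging these into \eqref{keyred} it suffices to have
$$k_1 x^p \le (k_2+\dots+k_m) x^p + \tfrac{2}{\pi} K \tfrac{x^{p+1}}{1-x},$$
but this loses the factor $2/\pi$ in an unhelpful place; instead I would keep $\sin x^p$ on both sides of \eqref{keyred} where it already appears and only lower-bound the genuinely new tail $K\sum_{n>p}\sin x^n$. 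Rearranging \eqref{keyred} as
$$\bigl(k_1 - k_2 - \dots - k_m\bigr)\sin x^p \le K\sum_{n=p+1}^{\infty}\sin x^n,$$
and then comparing $\sum_{n>p}\sin x^n$ with the single term $\sin x^{p+1}$ plus its own tail, one is led to an inequality of the shape $c\,\sin x^p \le K\,\frac{\sin x^{p+1}}{1 - (\text{ratio})}$. The clean way to make the ratio manageable is the elementary estimate $\sin x^{n+1} \ge x\,\sin x^n$ for $x\in(0,1)$ — which follows from $\sin(xt) \ge x\sin t$ for $t\in[0,\pi/2]$, a consequence of concavity of $\sin$ on that interval — giving $\sum_{n>p}\sin x^n \ge \sin x^{p+1}\sum_{j\ge 0} x^j = \frac{\sin x^{p+1}}{1-x} \ge \frac{x\sin x^p}{1-x}$. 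Hence \eqref{keyred} is implied by $(k_1-k_2-\dots-k_m)\sin x^p \le K\,\frac{x}{1-x}\sin x^p$, i.e. by $x(K+k_1-k_2-\dots-k_m) \ge k_1-k_2-\dots-k_m$, but this is \emph{not} quite the stated hypothesis, so I will need to be sharper: at $p=1$ the term $\sin x$ in $a_1$ must be compared against $\sin x^2 \le x\sin x$ crudely, whereas using $\sin x^2 \ge \sin x \cos x$-type bounds will not help. The genuinely sharp estimate is to keep $\sin x^p$ exactly and bound only the \emph{new mass}, observing that for the very first index $p=1$ one wants $k_1\sin x \le (k_2+\dots+k_m)\sin x + K\sum_{n\ge2}\sin x^n$; using the sharper concavity bound $\sin x^n \ge \frac{\sin x}{x}\,x^n$ for all $n$ (again from $\sin$ concave, $\sin x/x$ decreasing, wait — decreasing means this is a reverse inequality). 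This is the crux, and getting the constant $\pi/2$ to appear is exactly where the bound $\sin x \ge \frac{2}{\pi}x \cdot \frac{1}{x}\cdot(\dots)$ — i.e. $\frac{\sin x}{x}\ge\frac{2}{\pi}$ — must be combined with $\sin x^p \le x^p$ in the right places.

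The main obstacle, then, is the arithmetic bookkeeping that converts the per-block inequality \eqref{keyred} into precisely the threshold $x \ge d/(K+d)$ with $d = \max_j d_j$ and $d_j = \frac{\pi}{2}k_j - k_{j+1} - \dots - k_m$: the appearance of $\pi/2$ signals that one must upper-bound $k_j\sin x^p \le k_j x^p$ but lower-bound the tail by $\frac{2}{\pi}$ times a geometric series, so that the mismatch factor $\frac{\pi}{2}$ gets absorbed into the definition of $d_j$. Concretely I expect the clean route is: reduce to checking $a_n \le r_n$ only at block-starts (monotonicity argument, short), then for a block starting at power $x^p$ write $r_n \ge (k_{j+1}+\dots+k_m)\sin x^p + K\sum_{i>p}\sin x^i$ when $a_n = k_j\sin x^p$ — no wait, within a block the remaining terms of that same block have the \emph{same} factor $\sin x^p$, so actually the tightest case is the \emph{last} term of a block is easiest and the first term hardest, confirming $j=1$ suffices \emph{except} one must also handle $j\ge 2$ where $d_j$ could exceed $d_1$, which is why $d=\max_j d_j$ and not just $d_1$. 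After that reduction, bound $k_1\sin x^p \le k_1 x^p$, bound $\sum_{i>p}\sin x^i \ge \frac{2}{\pi}\frac{x^{p+1}}{1-x}$ and $\sin x^p \ge \frac{2}{\pi} x^p$, and the inequality $d_j\,x^p \le \frac{2}{\pi}\cdot\text{stuff}$ collapses — after cancelling a common $\frac{2}{\pi}x^p$ — to $x(K+d_j) \ge d_j$ for every $j$, equivalently $x \ge d_j/(K+d_j)$ for every $j$, and since $t\mapsto t/(K+t)$ is increasing this is $x \ge d/(K+d)$. I would present the monotonicity reduction as a short lemma-free paragraph, then the chain of elementary trigonometric inequalities ($\sin t \le t$, $\sin t \ge \frac{2}{\pi}t$ on $[0,\pi/2]$), then the geometric-series summation and the final cancellation, and invoke Kakeya's theorem to conclude $E(a_n)$ is a closed interval (connectedness plus $0$ and $\sum a_n$ being in it makes it a single interval rather than merely a finite union).
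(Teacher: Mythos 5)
Your final route is exactly the paper's proof: verify Kakeya's condition $a_n<r_n$ at every index $n=im+j$ by bounding $a_{im+j}\le k_j x^{i+1}$ from above and $r_{im+j}\ge \frac{2}{\pi}\bigl((k_{j+1}+\dots+k_m)x^{i+1}+K\frac{x^{i+2}}{1-x}\bigr)$ from below via Jordan's inequality, which collapses to $x\ge d_j/(K+d_j)$ and hence to $x\ge d/(K+d)$ by monotonicity of $t\mapsto t/(K+t)$. Your early claim that the first term of each block is automatically the worst case is false (e.g.\ $k_1=3$, $k_2=2$ gives $d_2>d_1$), but you correctly abandon it in favour of taking $\max_j d_j$, so the argument you end up with is sound and essentially identical to the paper's.
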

\begin{proof}
For any $n=im+j$, where $i \in N_0$, $j=\overline{1, m}$ we have
$$a_{im+j}=k_j \cdot \sin{x^{i+1}},$$
$$r_{im+j}=k_{j+1}\sin{x^{i+1}}+k_{j+2}\sin{x^{i+1}}+\dots +k_m\sin{x^{i+1}}+K\sum_{p=i+2}^{\infty} \sin{x^{p}}.$$
Taking into account Jordan's inequality \cite{Jordan's}
$$\frac{2x}{\pi} < \sin x < x ~ ~ ~ \text{for} ~ ~ ~ 0 < x < \frac{\pi}{2}, $$
we get
$$a_{im+j}<k_j x^{i+1},$$
$$r_{im+j}>\frac{2}{\pi} \left(K \frac{x^{i+2}}{1-x}+k_{j+1} x^{i+1} + \dots + k_m x^{i+1} \right).$$

Thus, for $x \geq d_j/(K+d_j)$, where $d_j=\frac{\pi}{2}k_j-k_{j+1}-k_{j+2}-\dots-k_m$ the following inequality holds
$$a_{im+j}<k_j x^{i+1}< \frac{2}{\pi} \left(K \frac{x^{i+2}}{1-x}+k_{j+1} x^{i+1} + \dots + k_m x^{i+1} \right)<r_{im+j}.$$

Moreover, if $x \geq d/(K+d)$, where $\displaystyle d=\max_{1 \leq j \leq m}{d_j}$, then inequality $a_{n}<r_{n}$ holds for any $n \in N$. Then due to Kakeya's result the set $E(a_n)$ is an interval.

\end{proof}

\begin{example}

For $k_1=2, k_2=1$ and $x=1/2$ the series \eqref{SIN} has form
$$2\sin\left(\frac{1}{2}\right)+\sin\left(\frac{1}{2}\right)+2\sin\left(\frac{1}{2}\right)^2+ \dots +2\sin\left(\frac{1}{2}\right)^n+\sin\left(\frac{1}{2}\right)^n+\dots.$$
In this case $K=3$ and $d_1=\pi-1, d_2=\pi/2$. Then, according to the Theorem \ref{I}, for any values of parameter $x$ satisfying the inequality
$$x \geq \frac{\left(\pi-1\right)}{3+\left(\pi-1\right)},$$
the set $E(a_n)$ is an interval.

\end{example}

\begin{theorem}
If the series \eqref{SIN} satisfies the condition
$$x \leq \frac{2k_m}{K\pi + 2k_m},$$
then $E(a_n)$ is not a finite union of closed intervals.
\end{theorem}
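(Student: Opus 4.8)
The plan is to mirror the structure of the proof of Theorem \ref{I}, but now run the opposite inequality $a_n > r_n$ at the appropriate indices, and combine it with the interval-containment fact from Theorem \ref{I} so that Kakeya's trichotomy rules out a finite union of closed intervals. Concretely, I would look at indices of the form $n = im + m$, i.e.\ the blocks ending with the smallest coefficient $k_m$, where
$$a_{im+m} = k_m \sin x^{i+1}, \qquad r_{im+m} = K \sum_{p=i+2}^{\infty} \sin x^{p}.$$
Using Jordan's inequality in the form $\sin x^{i+1} > \tfrac{2}{\pi} x^{i+1}$ for the term and $\sin x^p < x^p$ for the tail, one gets
$$a_{im+m} > \frac{2}{\pi} k_m x^{i+1}, \qquad r_{im+m} < K \frac{x^{i+2}}{1-x}.$$
Hence $a_{im+m} > r_{im+m}$ will follow once $\tfrac{2}{\pi} k_m x^{i+1} \ge K \tfrac{x^{i+2}}{1-x}$, i.e.\ once $\tfrac{2 k_m}{\pi} (1-x) \ge K x$, which rearranges exactly to $x \le \tfrac{2k_m}{K\pi + 2k_m}$. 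So under the hypothesis the inequality $a_n > r_n$ holds for all $n$ of the form $im+m$, hence for infinitely many $n$.

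Next I would invoke Theorem \ref{GN-Theorem} (the Guthrie--Nymann--Kakeya trichotomy). Since $a_n > r_n$ for infinitely many $n$, the set $E(a_n)$ cannot be a finite union of closed intervals: if it were such a finite union, then by Kakeya's characterization (the second bullet in the introduction, read contrapositively) we would need $a_n \le r_n$ for all sufficiently large $n$, contradicting the fact that the reverse strict inequality recurs infinitely often. This gives the conclusion directly, with no need to separately establish that $E(a_n)$ contains an interval or is a Cantorval — the statement only claims it is \emph{not} a finite union of closed intervals.

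One subtlety I would check carefully is the range of validity of Jordan's inequality: it requires $0 < x^{i+1} < \pi/2$, which is automatic here since $0 < x < 1$ so all powers $x^{i+1}$ lie in $(0,1) \subset (0,\pi/2)$. I would also make sure the algebra $\tfrac{2k_m}{\pi}(1-x) \ge Kx \iff x \le \tfrac{2k_m}{K\pi + 2k_m}$ is presented cleanly, and note that the factor $x^{i+1}$ cancels uniformly in $i$, so the single threshold on $x$ suffices for all relevant indices simultaneously. I do not anticipate a serious obstacle; the only mildly delicate point is phrasing the appeal to Kakeya's result so that "infinitely many $n$ with $a_n > r_n$" is correctly seen to obstruct "finite union of closed intervals" (equivalently, one may note that the complementary inequality failing infinitely often forces the set to be either a Cantor-type set or an M-Cantorval, in either case not a finite union of intervals).
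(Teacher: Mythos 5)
Your proposal is correct and follows essentially the same route as the paper: the paper likewise isolates the block-ending terms with coefficient $k_m$ (writing them as $a_{mi}=k_m\sin x^i$ rather than your shifted $a_{im+m}=k_m\sin x^{i+1}$, which is the same family of indices), applies Jordan's inequality to bound $a_n$ from below and the tail $r_n$ from above, cancels the common power of $x$ to reach the threshold $x\le 2k_m/(K\pi+2k_m)$, and concludes from $a_n>r_n$ holding infinitely often that $E(a_n)$ is not a finite union of closed intervals. The only cosmetic caveat is that this last step rests on the standard converse of Kakeya's sufficient condition (finite union of intervals implies $a_n\le r_n$ eventually), not literally on the contrapositive of the bullet as stated in the introduction, but the paper invokes it with exactly the same brevity.
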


\begin{proof}

To prove the Theorem we will show that inequality $a_n>r_n$ holds for infinite numbers of $n$. Taking into account Jordan's inequality, for any $i \in N$ we have
$$r_{mi}=K\sum_{p=i+1}^{\infty} \sin{x^p}<K\sum_{p=i+1}^{\infty} {x^p}=\frac{K x^{i+1}}{1-x},$$
$$a_{mi}=k_m\sin{x^i}>\frac{2k_m x^i}{\pi}.$$
It is easy to see that for $x \leq 2k_m / (K\pi + 2k_m)$ the following inequality holds
$$r_{mi}<\frac{K x^{i+1}}{1-x}<\frac{2k_m x^i}{\pi}<a_{mi}.$$
So $a_{mi}>r_{mi}$ for any $i \in N$, hence $E(a_n)$ is not a finite union of intervals.
\end{proof}

\begin{theorem}
If for the series \eqref{SIN} exist such numbers $n_0$ and $n^*$ such, that every number $n_0, n_0+1, n_0+2, \dots n_0+n^*$ can be obtained by summing up the numbers $k_1, k_2, \dots, k_m$ and parameter $x$ satisfies the inequality
$$x \geq \frac{\pi}{2n^*+\pi},$$
then $E(a_n)$ contains intervals.
\end{theorem}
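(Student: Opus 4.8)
The strategy is to reduce to the previously established interval criterion (the Bartoszewicz-type condition on multigeometric series, or directly Theorem~\ref{I}'s mechanism) by grouping the terms of \eqref{SIN} into blocks and showing that, under the hypothesis $x \geq \pi/(2n^*+\pi)$, a suitable \emph{sub-series} of \eqref{SIN} already has the property that each term is dominated by the tail after it, once we pass to the blocks indexed by sums of the $k_j$'s. Concretely, for a fixed level $i$ the numbers available at that level are $k_j\sin x^{i+1}$ for $j=1,\dots,m$, and by summing subsets of them we can realize every integer multiple in $\{n_0,n_0+1,\dots,n_0+n^*\}$ times $\sin x^{i+1}$. The point is that the span of these realizable values, namely a run of $n^*+1$ consecutive integers scaled by $\sin x^{i+1}$, behaves like a single "super-term" of size $\approx n_0\sin x^{i+1}$ whose associated gaps have length $\sin x^{i+1}$, and the tail of the series from level $i+1$ onward must be large enough to bridge those gaps.

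First I would set up the block decomposition: write $S_i = \sin x^{i}$, let $A_i = \{\sum_{j\in J} k_j : J\subseteq\{1,\dots,m\}\}\cdot S_{i+1}$ be the set of subsums obtainable from the $(i+1)$-st block, and let $R_i = K\sum_{p\geq i+2} S_p$ be the tail beyond that block. Then $E(a_n) \supseteq \bigoplus_{i\ge 0}(A_i)$ in the sense that any choice of one element from each block, summed, lies in $E(a_n)$ (we may freely take $0$ from a block). The key reduction is the standard fact (essentially Kakeya's interval lemma applied at the level of blocks): if for all sufficiently large $i$ the set $A_i$ contains an arithmetic-progression-like run of step $S_{i+1}$ and length at least $n^*+1$ (which the hypothesis on $n_0,\dots,n_0+n^*$ guarantees, giving consecutive integer multiples $n_0 S_{i+1},\dots,(n_0+n^*)S_{i+1}$), and if the tail $R_i$ together with all later blocks covers a full interval of length at least the maximal gap in $A_i$, then the blockwise sum contains an interval.

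Next I would verify the quantitative inequality that makes the covering work. Using Jordan's inequality $\tfrac{2x}{\pi} < \sin x < x$, the gap we must bridge after choosing from block $i$ is at most $S_{i+1} < x^{i+1}$, while the total remaining mass from block $i+1$ onward is at least the smallest nonzero value there, $k_m S_{i+2}$, plus more; more usefully, the relevant comparison is between the gap $S_{i+1}$ inside the run and the quantity that the subsequent run at level $i+1$ can contribute, which after scaling is governed by $n^* S_{i+2} \geq \tfrac{2n^*}{\pi}x^{i+2}$ versus $S_{i+1} < x^{i+1}$. The inequality $\tfrac{2n^*}{\pi}x^{i+2} \geq x^{i+1}$ is exactly $x \geq \pi/(2n^*)$, and the sharper bookkeeping that also counts the within-block contributions $k_{j}S_{i+1}$ weakens this to the stated $x \geq \pi/(2n^*+\pi)$, in complete parallel with the proof of Theorem~\ref{I}. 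Iterating this across all levels $i \geq i_0$ shows the blockwise subsum set contains an interval, hence so does $E(a_n)$.

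The main obstacle I expect is making the "run of $n^*+1$ consecutive realizable sums at each level glues into an interval" step fully rigorous: one must argue that consecutive integer runs $\{n_0,\dots,n_0+n^*\}$ at successive levels, scaled by the geometrically decreasing factors $S_{i+1}$, actually tile an interval without leaving gaps — this is where the precise constant $\pi/(2n^*+\pi)$ (rather than the naive $\pi/(2n^*)$) has to be extracted, by carrying the finite-block terms $k_{j+1}S_{i+1}+\dots+k_mS_{i+1}$ on the correct side of the inequality exactly as in Theorem~\ref{I}. The rest — convergence, the reduction to Kakeya's criterion at the block level, and the use of Jordan's inequality — is routine.
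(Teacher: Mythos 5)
Your plan follows the paper's proof essentially verbatim: realize each of the consecutive integers $n_0,\dots,n_0+n^*$ at every scale $\sin x^{i+1}$, reduce to the auxiliary series in which each $\sin x^{i}$ is repeated $n^*$ times, and verify Kakeya's interval criterion at the block boundaries via Jordan's inequality, so that the resulting interval, translated by $n_0\sum_i\sin x^i$, sits inside $E(a_n)$. One correction to your bookkeeping: the improvement from $x\ge\pi/(2n^*)$ to $x\ge\pi/(2n^*+\pi)$ does not come from carrying the within-block terms $k_{j+1}\sin x^{i+1}+\dots+k_m\sin x^{i+1}$ as in Theorem \ref{I} (those play no role once you have committed to a run of consecutive integer multiples); it comes from summing the \emph{entire} geometric tail of later levels, $n^*\sum_{p\ge i+2}\sin x^p>\tfrac{2n^*}{\pi}\cdot\tfrac{x^{i+2}}{1-x}$, against the gap $\sin x^{i+1}<x^{i+1}$, which gives $2n^*x\ge\pi(1-x)$, i.e.\ $x\ge\pi/(2n^*+\pi)$.
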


\begin{proof}
According to the statement for any $h \in \{0, 1, 2, \dots n^* \}$ there exists sequence $(c_i)^{m}_{i=1}$ of 0 and 1 such that
$$n_0+h=\sum_{i=1}^{m}c_i k_i.$$
From it follows that every number $t$ having the representation
$$t=\sum_{i=1}^{\infty} (n_0+h_i) \sin{x^i}=n_0 \sum_{i=1}^{\infty} \sin{x^i} + \sum_{i=1}^{\infty} h_i \sin{x^i}, ~ ~ ~ h_i \in \{0, 1, 2, \dots, n^*\},$$
is contained on $E(a_n)$. The set of all such numbers $t$ is an arithmetic sum of a constant and the set of subsums for the series
$$\sum_{n=1}^{\infty}{b_n}=\underbrace{\sin{x}+\dots+\sin{x}}_{n^*}+ \dots + \underbrace{\sin{x^{\left[\frac{n-1}{n^*}+1\right]}}+\dots + \sin{x^{\left[\frac{n-1}{n^*}+1\right]}}}_{n^*}+\dots,$$
where every term has $n^*$ repetitions. It is obvious, that $\displaystyle \sum_{p=n+1}^{\infty}b_p=r^*_n>b_n$ for $n \neq in^*$, $i \in N$. Taking into account Jordan's inequality, we have
$$b_{n^*i}=\sin{x^i}<x^i,$$
$$r^*_{n^*i}=n^*(\sin{x^{i+1}}+\sin{x^{i+2}}+ \dots)>n^*\left(\frac{2x}{\pi}+\frac{2x^2}{\pi}+\dots\right)=\frac{2n^*x}{\pi(1-x)}.$$
It is easy to show that for $x \geq \pi/(2n^*+\pi)$ the following inequality holds

$$b_{mi}<x^i<\frac{2n^*x^{i+1}}{\pi(1-x)}<r^*_{mi}.$$
Thus, $b_{n}<r^*_{n}$ for any $n \in N$. Then, due to Kakeya's result the set $E(b_n)$ is a interval. Since
$$n_0 \sum_{i=1}^{\infty} \sin{x^i} + E(b_n) \subset E(a_n),$$
then the set $E(a_n)$ contains intervals.
\end{proof}

\begin{corollary}
\label{MC}
If for the series \eqref{SIN} exist the numbers $n_0$ and $n^*$ such that every number $n_0, n_0+1, n_0+2, \dots, n_0+n^*$ can be obtained by summing up the numbers $k_1, k_2, \dots, k_m$ and parameter $x$ satisfies the inequality
$$\frac{\pi}{2n^*+\pi} \leq x \leq \frac{2k_m}{K\pi + 2k_m},$$
then the set $E(a_n)$ is a Cantorval.
\end{corollary}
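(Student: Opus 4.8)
The plan is to derive this corollary directly from the Guthrie--Nymann trichotomy (Theorem \ref{GN-Theorem}) together with the two theorems immediately preceding it. First I would note that the series \eqref{SIN} qualifies for Theorem \ref{GN-Theorem}: since $0<x<1$ we have $0<x^{i+1}<1<\pi/2$, so each term $k_j\sin x^{i+1}$ is strictly positive, and convergence was already established in Section 3. Hence $E(a_n)$ is the set of subsums of a convergent positive series, and by Theorem \ref{GN-Theorem} it must be one of exactly three things: a finite union of closed intervals, a set homeomorphic to the Cantor set, or an M-Cantorval.

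Next I would rule out the first two alternatives using the two halves of the hypothesis. The assumption that every number $n_0, n_0+1, \dots, n_0+n^*$ is a sum of distinct $k_i$'s, together with the lower bound $x \geq \pi/(2n^*+\pi)$, is precisely the hypothesis of the preceding theorem; applying it gives that $E(a_n)$ contains an interval, so $E(a_n)$ cannot be homeomorphic to the (nowhere dense, totally disconnected) Cantor set. The upper bound $x \leq 2k_m/(K\pi+2k_m)$ is the hypothesis of the theorem before that, which yields $a_{mi}>r_{mi}$ for all $i\in N$ and hence that $E(a_n)$ is not a finite union of closed intervals. With cases 1 and 2 excluded, only the M-Cantorval case survives, which is exactly the assertion.

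The single point deserving a remark is consistency: the displayed chain $\pi/(2n^*+\pi)\leq x \leq 2k_m/(K\pi+2k_m)$ is vacuous unless $\pi/(2n^*+\pi)\leq 2k_m/(K\pi+2k_m)$, i.e. unless $n^*$ is large enough relative to $K/k_m$; but the corollary is phrased conditionally on the existence of such an $x$, so one simply feeds that fixed $x$ into each of the two theorems in turn. I do not anticipate any real obstacle here — the mathematical substance lives entirely in the earlier results, and the proof is the short observation that their hypotheses are compatible and jointly pin down the third class of the trichotomy.
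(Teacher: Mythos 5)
Your proposal is correct and is precisely the intended derivation: the paper states this as a corollary of the two preceding theorems (the lower bound on $x$ gives an interval inside $E(a_n)$, the upper bound gives $a_{mi}>r_{mi}$ so $E(a_n)$ is not a finite union of closed intervals), and the Guthrie--Nymann trichotomy then forces the M-Cantorval case. Your remark about the hypotheses being nonvacuous only when $n^*$ is large enough relative to $K/k_m$ is a sensible observation that the paper leaves implicit.
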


\begin{example}
For $k_1=8, k_2=7, k_3=6, k_4=5, k_5=4$ and $x=1/15$ the series \eqref{SIN} has form

$$8\sin{\frac{1}{15}}+7\sin{\frac{1}{15}}+6\sin{\frac{1}{15}}+5\sin{\frac{1}{15}}+4\sin{\frac{1}{15}}+\dots$$
$$\dots+8\sin{\frac{1}{15^n}}+7\sin{\frac{1}{15^n}}+6\sin{\frac{1}{15^n}}+5\sin{\frac{1}{15^n}}+4\sin{\frac{1}{15^n}}+\dots$$
In this case $K=30$ and $n_0=4, n^*=22$. Then, according to the Corollary \ref{MC}, for any values of parameter $x$ satisfying the condition
$$\frac{\pi}{44+\pi} \leq x \leq \frac{8}{30\pi+8}$$
the set of subsums $E(a_n)$ is a Cantorval.
\end{example}

\begin{theorem}
\label{TC}
If the series \eqref{SIN} satisfies the condition
$$x \leq \frac{l}{K + l},$$
$$\text{where} ~ l=\min_{1 \leq j \leq m}{l_j}, ~ l_j=\frac{2}{\pi}k_j-k_{j+1}-k_{j+2}-\dots-k_m,$$
then $E(a_n)$ is a Cantor-type set.
\end{theorem}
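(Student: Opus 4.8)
The plan is to verify the third alternative in Kakeya's theorem: by that result, it suffices to show that $a_n > r_n$ holds for all sufficiently large $n \in N$ in order to conclude that $E(a_n)$ is homeomorphic to the classical Cantor set. In fact I expect the inequality $a_n > r_n$ to hold for every $n$, and that is what I would prove.

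First I would carry over the bookkeeping from the proof of Theorem~\ref{I}: for $n = im+j$ with $i \in N_0$ and $j = \overline{1,m}$ one has
$$a_{im+j} = k_j \sin x^{i+1}, \qquad r_{im+j} = (k_{j+1}+\dots+k_m)\sin x^{i+1} + K\sum_{p=i+2}^{\infty}\sin x^p,$$
with the empty sum convention when $j=m$. Next I would apply Jordan's inequality in the two directions that help here: the lower bound $\sin x^{i+1} > \frac{2}{\pi}x^{i+1}$, valid since $0 < x^{i+1} < 1 < \pi/2$, to estimate $a_{im+j}$ from below, and the upper bound $\sin x^p < x^p$ to estimate $r_{im+j}$ from above, which gives
$$a_{im+j} > \frac{2}{\pi}k_j x^{i+1}, \qquad r_{im+j} < x^{i+1}\left[(k_{j+1}+\dots+k_m) + K\frac{x}{1-x}\right].$$

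It then suffices to have $(k_{j+1}+\dots+k_m) + K\frac{x}{1-x} \le \frac{2}{\pi}k_j$ for every $j$, which is precisely $K\frac{x}{1-x} \le l_j$, equivalently $x \le l_j/(K+l_j)$. Here I would use that the map $t \mapsto t/(K+t)$ is strictly increasing on $(-K,\infty)$ (and note that $l_j > -K$ always holds, while the statement is vacuous unless $l = \min_j l_j > 0$), so the hypothesis $x \le l/(K+l)$ secures $x \le l_j/(K+l_j)$ for all $j = \overline{1,m}$ simultaneously. Chaining the estimates yields $r_{im+j} < \frac{2}{\pi}k_j x^{i+1} < k_j \sin x^{i+1} = a_{im+j}$, i.e. $a_n > r_n$ for every $n \in N$, and the conclusion follows from Kakeya's result.

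None of these steps is a genuine obstacle; the two points that need a moment's care are keeping the inequality $a_n > r_n$ strict — which is guaranteed by the strict bound $\sin x^p < x^p$, so the non-strict inequality $x \le l/(K+l)$ in the hypothesis is already enough — and the monotonicity remark that lets the single quantity $\min_j l_j$ dominate all the individual constraints $x \le l_j/(K+l_j)$ at once.
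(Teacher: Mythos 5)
Your proof is correct and follows essentially the same route as the paper's: decompose the index as $n=im+j$, bound $a_{im+j}$ from below and $r_{im+j}$ from above via Jordan's inequality, reduce the resulting condition to $x\le l_j/(K+l_j)$ for each $j$, and invoke Kakeya's criterion. The only additions are small justifications the paper leaves implicit (the monotonicity of $t\mapsto t/(K+t)$ and the remark on strictness), which are welcome but do not change the argument.
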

\begin{proof}
For any $n=im+j$, where $i \in N_0$, $j=\overline{1, m}$ the following inequality holds
$$a_{im+j}=k_j \cdot \sin{x^{i+1}},$$
$$r_{im+j}=k_{j+1}\sin{x^{i+1}}+k_{j+2}\sin{x^{i+1}}+\dots +k_m\sin{x^{i+1}}+K\sum_{p=i+2}^{\infty} \sin{x^{p}}.$$
Taking into account Jordan's inequality, we have
$$a_{im+j}>\frac{2}{\pi}k_j x^{i+1},$$
$$r_{im+j}<K \frac{x^{i+2}}{1-x}+k_{j+1} x^{i+1} + \dots + k_m x^{i+1}.$$

Thus, for $x \leq l_j/(K+l_j)$, where $l_j=\frac{2}{\pi}k_j-k_{j+1}-k_{j+2}-\dots-k_m$ the following inequality holds
$$a_{im+j}>\frac{2}{\pi}k_j x^{i+1} > K \frac{x^{i+2}}{1-x}+k_{j+1} x^{i+1} + \dots + k_m x^{i+1} > r_{im+j}.$$

Moreover, if $x \leq l/(K+l)$, where $\displaystyle l=\min_{1 \leq j \leq m}{l_j}$, then $a_{n}>r_{n}$ for any $n \in N$. Then, due to Kakeya's result the set  $E(a_n)$ is a Cantor-type set.
\end{proof}

\begin{example}

For $k_1=4, k_2=1$ and $x=1/10$ the series \eqref{SIN} has form

$$4\sin\left(\frac{1}{10}\right)+\sin\left(\frac{1}{10}\right)+\dots +4\sin\left(\frac{1}{10}\right)^n+\sin\left(\frac{1}{10}\right)^n+\dots.$$
In this case $K=5$ and $l_1=8/\pi-1, l_2=2/\pi$. Then, according to the Theorem \ref{TC}, for any values of parameter $x$ satisfying the condition
$$x \leq \frac{2/\pi}{5+2/\pi}$$
the set $E(a_n)$ is a Cantor-type set.

\end{example}

\section{Open problems and hypothesis}

As a result of the paper we find some condition for $E(a_n)$ to be an interval, Cantor-type set or Cantorval. Hoverer, there still exist a few open problems:
\begin{itemize}
\item for some values of parameter $x$ topological type of $E(a_n)$ is unknown (for $2k_m/(\pi K+2k_m)< x <d/(K+d)$ as an example);

\item calculation of Lebesgue measure for Cantorvals generated by main series;

\item we can't still find the necessary and sufficient condition for $E(a_n)$ to have a zero Lebesgue measure;

\item calculation of the Hausdorff-Besicovitch dimension of $E(a_n)$ if it has zero Lebesgue measure.

\end{itemize}

We described some properties of the set of subsums for the series \eqref{SIN} in the Section 3 by using properties of its terms $\displaystyle k_j \frac{2x^n}{\pi} < k_j \sin x^n < k_j x^n$ for any $n \in N$, $j \in \{1, 2,\dots, m \}$ and $0<x<1$. Let's remark that terms of the series \eqref{SIN} are located between terms of two multigeometric series \eqref{MGS} with $q_1=2x/\pi$ and $q_2=x$.
Taking into account such considering, we can formulate main hypothesis.
\begin{proposition}
Let $\sum_{n=1}^{\infty} a_n$ and $\sum_{n=1}^{\infty} b_n$ be a convergent positive series with the same types of the set of subsums (a finite union of intervals, a Cantor-type set or a M-Cantorval).
If $\sum_{n=1}^{\infty} c_n$ satisfies condition
\begin{equation}
\left[
\begin{array}{l}
\displaystyle a_n \leq c_n \leq b_n \leq r^{a}_n=\sum_{i=n+1}^{\infty}{a_i} \leq r^{c}_n=\sum_{i=n+1}^{\infty}{c_i} \leq r^{b}_n=\sum_{i=n+1}^{\infty}{b_i}, \\
\displaystyle r^{a}_n=\sum_{i=n+1}^{\infty}{a_i} < r^{c}_n=\sum_{i=n+1}^{\infty}{c_i} < r^{b}_n=\sum_{i=n+1}^{\infty}{b_i} < a_n < c_n < b_n, \\
\end{array}
\right.
\end{equation}
for any $n \in N$ then $E(a_n)$, $E(b_n)$ and $E(c_n)$ have the same topological type.
\end{proposition}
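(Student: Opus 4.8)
A natural approach is the following. For a convergent positive series $\sum u_n$ put $r^{u}_n=\sum_{i>n}u_i$ and $N^{>}(u)=\{n:u_n>r^{u}_n\}$; recall Kakeya's criterion \cite{Kakeya}: $E(u_n)$ is a finite union of closed intervals when $N^{>}(u)$ is finite, and is homeomorphic to the Cantor set when $N^{>}(u)$ is cofinite. The first step is a reduction. The two alternatives in the hypothesis are mutually exclusive at a fixed $n$, since $b_n\le r^{a}_n$ and $r^{b}_n<a_n$ would give $r^{b}_n<a_n\le b_n\le r^{a}_n\le r^{b}_n$; hence at each $n$ exactly one of them holds. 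Let $S$ be the set of indices at which the second (``Cantor-type'') chain holds. If $n\notin S$ then $a_n\le b_n\le r^{a}_n\le r^{b}_n$ and $c_n\le b_n\le r^{a}_n\le r^{c}_n$, so $u_n\le r^{u}_n$ for each $u\in\{a,b,c\}$; if $n\in S$ then $r^{a}_n\le r^{b}_n<a_n\le b_n$ and $r^{c}_n\le r^{b}_n<a_n\le c_n$, so $u_n>r^{u}_n$ for each $u\in\{a,b,c\}$. Therefore
$$N^{>}(a)=N^{>}(b)=N^{>}(c)=S,$$
and if $S$ is finite all three achievement sets are finite unions of closed intervals, while if $S$ is cofinite all three are homeomorphic to the Cantor set; in these two cases the assumption that $E(a_n)$ and $E(b_n)$ have the same type is not even used.

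It remains to treat the case where $S$ is infinite and co-infinite, in which the elementary criteria say nothing. By Theorem~\ref{GN-Theorem} each of $E(a_n),E(b_n),E(c_n)$ is a finite union of intervals, a Cantor set, or an M-Cantorval; we know that $E(a_n)$ and $E(b_n)$ share a type and must propagate it to $E(c_n)$ through the termwise sandwich. Note that at every $n\in S$ the refined inequalities give
$$r^{a}_n\le r^{c}_n\le r^{b}_n<a_n\le c_n\le b_n,$$
so the complementary interval $(r^{c}_n,c_n)$ of $E(c_n)$ straddles the common interval $(r^{b}_n,a_n)$, which in turn lies inside the corresponding complementary intervals of $E(a_n)$ and $E(b_n)$. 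The plan is to establish three transfer lemmas: (i) $c_n\le b_n$ and $r^{c}_n\le r^{b}_n$ for all $n$, together with total disconnectedness of $E(b_n)$, force the gaps $(p+r^{c}_n,\,p+c_n)$ (over $n\in S$ and over prefix sums $p$ of $c_1,\dots,c_{n-1}$) to accumulate at every point of $E(c_n)$, so that $E(c_n)$ is totally disconnected; (ii) $c_n\ge a_n$ and $r^{c}_n\ge r^{a}_n$ for all $n$, together with $E(a_n)$ having nonempty interior, force $E(c_n)$ to have nonempty interior — ideally by showing that an interval inside $E(a_n)$ comes from a grouping of the terms into consecutive blocks whose block-level Kakeya inequalities only improve when each $a_k$ is raised to $c_k$; (iii) $E(c_n)$ has finitely many complementary components exactly when $E(a_n)$, equivalently $E(b_n)$, does. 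Granting (i)--(iii), Theorem~\ref{GN-Theorem} closes the argument: if $E(a_n)$ is a Cantor set, (i) makes $E(c_n)$ totally disconnected and, being perfect, a Cantor set; if $E(a_n)$ is an M-Cantorval, (ii) and (iii) make $E(c_n)$ have nonempty interior and infinitely many components, hence an M-Cantorval; and the finite-union case follows from (ii) and (iii) in the same way.

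The main obstacle is precisely (i)--(iii) in this wild case, and behind it the absence — recalled in Section~2 — of any workable criterion separating M-Cantorvals from Cantor sets. Two positive series can share the same index set $S$ and still differ in topological type, so the reasoning cannot run through $S$ alone: one must produce, from the \emph{topological type} of $E(a_n)$ and $E(b_n)$, an explicit combinatorial witness — a suitable family of complementary gaps, or a block decomposition realising a genuine subinterval — that is robust enough to be carried termwise over to $\sum c_n$. Property (ii) looks the most approachable, since largeness of the terms only helps an interval persist; but (i) and, above all, the Cantorval half of (iii) are where the real difficulty lies, and building such a witness uniformly is the step at which, in our view, the statement is currently out of reach — which is why we record it here as a conjecture rather than a theorem.
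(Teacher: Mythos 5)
The paper contains no proof of this Proposition: it is stated explicitly as the authors' ``main hypothesis,'' and immediately afterwards they remark only that the statement is easy when $E(a_n)$ and $E(b_n)$ are both finite unions of closed intervals, that it can also be proved when the inequalities $a_n\le r^{a}_n$ and $b_n\le r^{b}_n$ hold for only finitely many $n$, and that ``in general case this hypothesis is not obvious and not so easy to prove.'' Your partial argument covers exactly those two tractable cases, and does so correctly: the observation that the two alternatives in the hypothesis are mutually exclusive at each index, so that the sandwich forces $N^{>}(a)=N^{>}(b)=N^{>}(c)=S$, combined with Kakeya's criterion when $S$ is finite or cofinite, is precisely the mechanism behind the authors' remark, and your formulation via the common index set $S$ is in fact more explicit than anything in the paper. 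Your diagnosis of where the real difficulty lies --- the case where $S$ is infinite and co-infinite, in which Kakeya's criterion is silent, no workable characterization separates Cantor sets from M-Cantorvals, and your transfer lemmas (i)--(iii) remain unproved --- coincides with the authors' own assessment. So there is no gap in your work relative to the paper; but be aware that neither you nor the authors have proved the statement as phrased, and it must be treated as a conjecture rather than a theorem.
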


It is easy to prove this statement for case when $E(a_n)$ and $E(b_n)$ are both a finite union of closed intervals. We also can prove the hypothesis for case when inequalities $a_n \leq r^{a}_{n}$ and $b_n \leq r^{b}_{n}$ hold only for finite numbers of $n$. However, in general case this hypothesis is not obvious and not so easy to prove.

\subsection*{Acknowledgements}

The authors would like to thank Professor Aniceto Murillo and the administration of the University of Malaga for the invitation and opportunity to continue our mathematical research in Spain during such difficult period for Ukrainian science.

\newpage


\begin{thebibliography}{HD82}




\normalsize
\baselineskip=17pt


\bibitem[BB]{Banakh}
T. Banakh, A. Bartoszewicz, M. Filipczak, E. Szymonik, \emph{Topological and measure properties of some self-similar sets}, Topol. Methods Nonlinear Anal. 46 (2015), no. 2, 1013--1028.


\bibitem[BF]{Bartoszewicz}
A. Bartoszewicz, M. Filipczak, E. Szymonik, \emph{Multigeometric sequences and Cantorvals}, Central European Journal of Mathematics 12 (2014), no. 7, 1000--1007.

\bibitem[BP]{Bielas}
W. Bielas, S. Plewik, M. Walczy\'{n}ska, \emph{On the center of distances}, European Journal of Mathematics 4 (2018), 687--698.

\bibitem[DJ]{Jordan's}
N. Da-Wei, C. Jian, Q. Feng, \emph{Generealizations of Jordan's Inequality and Concerned Relations}, U.P.B. Sci. Bull. 72 (2010), no.~3, 85--98.

\bibitem[FF]{Ferdinands}
J. Ferdinands, T. Ferdinands, \emph{A family of Cantorvals}, Open Mathematics 17 (2019), 1468--1475.


\bibitem[GN]{Guthrie 1988}
 J. A. Guthrie , J. E. Nymann, \emph{The topological structure of the set of subsums of an infinite series}, Colloquium Mathematicum 55 (1988), no. 2, 323--327.

\bibitem[K]{Kakeya}
S. Kakeya, \emph{On the partial sums of an infinite series}, The Science Reports of the T\^{o}hocu University 3 (1914), 159--164.

\bibitem[MO]{Mendes}
P. Mendes, F. Oliveira, \emph{On the topological structure of the arithmetic sum of two Cantor sets}, Nonlinearity 7 (1994), 329--343.

\bibitem[NS]{Nymann}
J. Nymann, R. Saenz, \emph{On a paper of Guthrie and Nymann on subsums of infinite series}, Colloq. Math. 83 (2000), no. 1, 1--4.

\bibitem[PKA]{PKA}
M. V. Pratsyovitiy, D. M. Karvatskiy, \emph{Jacobsthal-Lucas series and their applications}, Algebra and discrete mathematics 24 (2017), no.~1, 169–180.

\bibitem[PKB]{PKB}
M. V. Pratsiovytyi, D. M. Karvatskyi, \emph{The set of subsums for modified Guthrie-Nymann's series}, Bukovinian Mathematical Journal 10 (2022), no. 2, 195--203. (in Ukrainian)



\end{thebibliography}
\end{document}